\documentclass[12pt]{amsart}
\usepackage{amscd,amssymb,amsmath,multicol}
\newtheorem{thm}[equation]{Theorem}
\numberwithin{equation}{section}
\newtheorem{cor}[equation]{Corollary}

\newtheorem{prop}[equation]{Proposition}
\newtheorem{tab}[equation]{Table}

\begin{document}
\raggedbottom \voffset=-.7truein \hoffset=0truein \vsize=8truein
\hsize=6truein \textheight=8truein \textwidth=6truein
\baselineskip=18truept
\def\mapright#1{\ \smash{\mathop{\longrightarrow}\limits^{#1}}\ }
\def\mapleft#1{\smash{\mathop{\longleftarrow}\limits^{#1}}}
\def\mapup#1{\Big\uparrow\rlap{$\vcenter {\hbox {$#1$}}$}}
\def\mapdown#1{\Big\downarrow\rlap{$\vcenter {\hbox {$\ssize{#1}$}}$}}
\def\mapne#1{\nearrow\rlap{$\vcenter {\hbox {$#1$}}$}}
\def\mapse#1{\searrow\rlap{$\vcenter {\hbox {$\ssize{#1}$}}$}}
\def\mapr#1{\smash{\mathop{\rightarrow}\limits^{#1}}}
\def\ss{\smallskip}
\def\ssum{\sum\limits}
\def\dsum{{\displaystyle{\sum}}}
\def\vp{v_1^{-1}\pi}
\def\at{{\widetilde\alpha}}
\def\sm{\wedge}
\def\la{\langle}
\def\ra{\rangle}
\def\on{\operatorname}
\def\spin{\on{Spin}}
\def\kbar{{\overline k}}
\def\qed{\quad\rule{8pt}{8pt}\bigskip}
\def\ssize{\scriptstyle}
\def\a{\alpha}
\def\bz{{\Bbb Z}}
\def\im{\on{im}}
\def\ct{\widetilde{C}}
\def\ext{\on{Ext}}
\def\sq{\on{Sq}}
\def\eps{\epsilon}
\def\ar#1{\stackrel {#1}{\rightarrow}}
\def\br{{\bold R}}
\def\bc{{\bold C}}
\def\bN{{\bold N}}
\def\si{\sigma}
\def\Ebar{{\overline E}}

\def\tfrac{\textstyle\frac}
\def\tb{\textstyle\binom}
\def\Si{\Sigma}
\def\w{\wedge}
\def\equ{\begin{equation}}
\def\b{\beta}
\def\G{\Gamma}
\def\g{\gamma}
\def\psit{\widetilde{\Psi}}
\def\tht{\widetilde{\Theta}}

\def\fbar{{\overline f}}
\def\endeq{\end{equation}}
\def\sn{S^{2n+1}}
\def\zp{\bold Z_p}
\def\A{{\cal A}}
\def\P{{\cal P}}
\def\cj{{\cal J}}
\def\zt{{\bold Z}_2}
\def\bs{{\bold s}}
\def\bof{{\bold f}}
\def\bq{{\bold Q}}
\def\be{{\bold e}}
\def\Hom{\on{Hom}}
\def\ker{\on{ker}}
\def\coker{\on{coker}}
\def\da{\downarrow}
\def\colim{\operatornamewithlimits{colim}}
\def\zphat{\bz_2^\wedge}
\def\io{\iota}
\def\Om{\Omega}
\def\u{{\cal U}}
\def\e{{\cal E}}
\def\exp{\on{exp}}
\def\line{\rule{.6in}{.6pt}}
\def\wbar{{\overline w}}
\def\xbar{{\overline x}}
\def\ybar{{\overline y}}
\def\zbar{{\overline z}}
\def\ebar{{\overline \be}}
\def\nbar{{\overline n}}
\def\rbar{{\overline r}}
\def\Ubar{{\overline U}}
\def\et{{\widetilde e}}
\def\ni{\noindent}
\def\coef{\on{coef}}
\def\den{\on{den}}
\def\lcm{\on{l.c.m.}}
\def\vi{v_1^{-1}}
\def\ot{\otimes}
\def\psibar{{\overline\psi}}
\def\mhat{{\hat m}}
\def\exc{\on{exc}}
\def\ms{\medskip}
\def\ehat{{\hat e}}
\def\etao{{\eta_{\text{od}}}}
\def\etae{{\eta_{\text{ev}}}}
\def\dirlim{\operatornamewithlimits{dirlim}}
\def\gt{\widetilde{L}}
\def\lt{\widetilde{\lambda}}
\def\sgd{\on{sgd}}
\def\ord{\on{ord}}
\def\gd{{\on{gd}}}
\def\rk{{{\on{rk}}_2}}
\def\nbar{{\overline{n}}}

\def\N{{\Bbb N}}
\def\Z{{\Bbb Z}}
\def\Q{{\Bbb Q}}
\def\R{{\Bbb R}}
\def\C{{\Bbb C}}
\def\l{\left}
\def\r{\right}
\def\ls{\leq}
\def\gs{\geq}
\def\bg{\bigg}
\def\({\bigg(}
\def\[{\bigg[}
\def\){\bigg)}
\def\]{\bigg]}
\def\colon{{:}\;}
\def\lg{\langle}
\def\rg{\rangle}
\def\t{\text}
\def\f{\frac}
\def\mo{\on{mod}}
\def\vexp{v_1^{-1}\exp}
\def\al{\alpha}
\def\ve{\varepsilon}
\def\bi{\binom}
\def\eq{\equiv}
\def\cs{\cdots}
\def\dstyle{\displaystyle}
\def\Remark{\noindent{\it  Remark}}
\title
{$v_1$-periodic 2-exponents of $SU(2^e)$ and $SU(2^e+1)$}
\author{Donald M. Davis}
\address{Department of Mathematics, Lehigh University\\Bethlehem, PA 18015, USA}
\email{dmd1@lehigh.edu}
\date{September 22, 2011}

\keywords{homotopy groups, special unitary groups, exponents, $v_1$-periodicity}
\thanks {2000 {\it Mathematics Subject Classification}:
55Q52,11B73.}

\maketitle
\begin{abstract} We determine precisely the largest $v_1$-periodic homotopy groups
of $SU(2^e)$ and $SU(2^e+1)$. This gives new results about the largest actual homotopy
groups of these spaces. Our proof relies on results about 2-divisibility of restricted
sums of binomial coefficients times powers proved by the author in a companion paper.
\end{abstract}
\section{Main result}\label{intro}
The 2-primary $v_1$-periodic homotopy groups, $\vi\pi_i(X)$, of a topological space $X$ are a
localization of a first approximation to its 2-primary homotopy groups. They are roughly the portion of $\pi_*(X)$ detected by 2-local $K$-theory.(\cite{Bo})
If $X$ is a sphere or compact Lie group, each $v_1$-periodic homotopy
group of $X$ is a direct summand of some actual homotopy group of $X$.(\cite{DM})

Let
$$T_j(k)=\sum_{\text{odd }i}\tbinom ji i^k$$
denote one family of partial Stirling numbers. In \cite{partial}, the author obtained several results about
$\nu(T_j(k))$, where $\nu(n)$ denotes the exponent of 2 in $n$. Some of those will be used in this paper, and will
be restated as needed.

Let
$$\be(k,n)=\min(\nu(T_j(k)):\ j\ge n).$$
It was proved in \cite[1.1]{BDSU} (see also \cite[1.4]{DP}) that $\vi\pi_{2k}(SU(n))$
is isomorphic to $\bz/2^{\be(k,n)-\eps}$ direct sum with possibly one or two $\bz/2$'s.
Here $\eps=0$ or 1, and $\eps=0$ if $n$ is odd or if  $k\equiv n-1$ mod 4, which are the
only cases required here.

Let $$s(n)=n-1+\nu([n/2]!).$$ It was proved in \cite{DS1} that $\be(n-1,n)\ge s(n)$. Let
$$\ebar(n)=\max(\be(k,n):k\in\Z).$$
Thus $\ebar(n)$ is what we might call the $v_1$-periodic 2-exponent of $SU(n)$. Then clearly
\begin{equation}\label{comp3}s(n)\le\be(n-1,n)\le\ebar(n),\end{equation}
and calculations suggest that both of these inequalities are usually quite close to being equalities.
In \cite[p.22]{D23}, a table is given comparing the numbers in (\ref{comp3}) for $n\le38$.

Our main theorem verifies a conjecture of \cite{D23} regarding the values in (\ref{comp3})
when $n=2^e$ or $2^e+1$.
\begin{thm}\label{main} \qquad

\begin{enumerate}
\item[a.]
If $e\ge3$, then $\be(k,2^e)\le 2^e+2^{e-1}-1$ with equality occurring iff $k\equiv 2^e-1\mod 2^{2^{e-1}+e-1}$.
\item[b.] If $e\ge2$, then $\be(k,2^e+1)\le 2^e+2^{e-1}$ with equality occurring iff $k\equiv 2^e+2^{2^{e-1}+e-1}\mod 2^{2^{e-1}+e}$.
\end{enumerate}
\end{thm}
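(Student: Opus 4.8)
The plan is to work entirely with the $2$-adic valuations $\nu(T_j(k))$ and to feed in the divisibility estimates for $T_j(k)$ from the companion paper \cite{partial}. Since $\be(k,n)=\min_{j\ge n}\nu(T_j(k))$ is a minimum, each assertion decomposes into three sub-tasks. Writing $B$ for the claimed bound ($B=2^e+2^{e-1}-1$ in (a), $B=2^e+2^{e-1}$ in (b)), I must: (i) show $\be(k,n)\le B$ for every $k$, i.e.\ for each $k$ exhibit one $j\ge n$ with $\nu(T_j(k))\le B$; (ii) for $k$ in the stated residue class, prove the \emph{uniform} lower bound $\nu(T_j(k))\ge B$ for all $j\ge n$, so that the minimum is exactly $B$; and (iii) for $k$ outside that class, produce a single $j\ge n$ with $\nu(T_j(k))<B$. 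At the outset I would note that in both cases $B=s(n)+1$, so the theorem is really the assertion $\ebar(n)=s(n)+1$ together with an identification of precisely the congruence class on which the lower bound $\be(n-1,n)\ge s(n)$ of \cite{DS1} is beaten by one.

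To make (i)--(iii) tractable I would first replace the infinite range $j\ge n$ by a bounded one. Expanding $T_j(k)=\sum_{\t{odd }i}\tbinom ji i^k$ via falling factorials (equivalently via the Stirling/finite-difference expansion) gives a lower estimate of the shape $\nu(T_j(k))\ge\nu([j/2]!)+c$, whose right-hand side grows linearly in $j$; hence $\nu(T_j(k))>B$ once $j$ exceeds roughly $3\cdot2^e$, and the minimum defining $\be(k,n)$ is attained in a window $n\le j\le n+O(2^e)$. Within this window I would further reduce the dependence on $k$: to decide $\nu(T_j(k))$ only up to level $B+1$ one needs $i^k$ only modulo a fixed power of $2$, and the combinatorial cancellation isolated in \cite{partial} collapses this dependence down to $k$ modulo $2^{2^{e-1}+e-1}$ (resp.\ $2^{2^{e-1}+e}$), which is exactly the modulus appearing in the statement.

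With the problem now finite in both $j$ and the residue of $k$, the upper bound (i) follows by choosing, for each residue class, the $j$ in the window at which the leading term of the expansion is least divisible; the factor $[n/2]!$ in $s(n)$ is the source of the main $2^e+2^{e-1}-2$ of divisibility, and a single next-order term supplies the remaining unit, giving $B=s(n)+1$. For (iii) the same leading-term computation, carried out for a non-extremal residue, exposes an odd coefficient at level strictly below $B$ for an explicit $j$. The crux is (ii): for the extremal residue I must show that \emph{no} value of $j\ge n$ drops below $B$, which I would organize by the $2$-adic carry pattern of the binomials $\tbinom ji$ and of $i^k$ as $j$ ranges over the window and $k$ over the class, using the sharpest estimates of \cite{partial} to verify that all potential low-valuation contributions cancel.

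The main obstacle is precisely this uniform lower bound (ii) together with the sharpness of the ``$+1$''. A single unlucky $j$ with $\nu(T_j(k))=B-1$ would collapse the claimed equality, so the cancellations must be controlled simultaneously across the whole window of $j$, and distinguishing valuation $s(n)$ from $s(n)+1$ is the most delicate point of all the estimates. I expect the subtlety to concentrate in case (b), where the extremal class $k\equiv 2^e+2^{2^{e-1}+e-1}$ is singled out from the two residues reducing to $2^e$ modulo $2^{2^{e-1}+e-1}$ by the value of one further binary digit of $k$; pinning down which digit produces the extra unit of divisibility, and why, is where the heart of the argument — and the reliance on \cite{partial} — will lie.
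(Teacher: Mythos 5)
Your three-way decomposition (upper bound for every $k$, uniform lower bound on the extremal residue class, a violating $j$ off that class) is the right skeleton, and your reduction of $j$ to a finite window via $\nu(T_j(k))\ge\nu([j/2]!)$ is legitimate. But the proposal stops exactly where the content of the theorem begins, and the mechanisms you sketch there are not ones that can be made to work. For your sub-task (ii) you must control $\nu(T_j(k))$ for \emph{every} $j$ in the window --- still on the order of $2^{e+1}$ values of $j$, a number growing with $e$, so a ``finite check'' cannot prove the statement for all $e$; one needs an argument uniform in $j$ and $e$. Your plan to ``organize by the 2-adic carry pattern'' and ``verify that all potential low-valuation contributions cancel'' is a restatement of the claim, not a method, and the result of \cite{partial} you invoke (the two-parameter valuation formula, Theorem \ref{thm0} here) applies only to $j=2^e+1$ and $j=2^e+2$, not to the rest of the window. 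The paper instead proves the lower bound for \emph{all} $j\ge n$ at once (Theorem \ref{five}(ii),(iii)): write $k=p+m$ with $p=2^e+\eps-1$ and $\nu(m)\ge 2^{e-1}+e-1$, split
$$T_j(k)=T_j(p)+\sum\tbinom j{2i+1}(2i+1)^p\bigl((2i+1)^m-1\bigr),$$
and estimate the two pieces separately via Stirling-number identities and Propositions \ref{DSthm}--\ref{lcor1}. In particular, the ``$+1$'' in part (b) --- which you correctly identify as the most delicate point --- arises because when $j=2^e+1$ and $\nu(k-2^e)=2^{e-1}+e-1$ \emph{both} pieces have valuation exactly $2^e+2^{e-1}-1$, so their sum gains at least one more factor of $2$. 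Nothing in your outline produces this cancellation.

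The other gap concerns the upper bound and the non-extremal classes. Choosing ``the $j$ at which the leading term is least divisible'' does not decide these cases; what is actually needed is the precise structure of Theorem \ref{thm0}: 2-adic integers $x_{i,n}$ with $\nu\bigl(T_n(2^{e-1}x+2^{e-1}+i)\bigr)=\nu(x-x_{i,n})+n-2$, together with the exact values or lower bounds for $\nu(x_{i,n})$ for \emph{both} $n=2^e+1$ and $n=2^e+2$. The paper's proof is then a case analysis on $(i,\nu(x))$ deciding which of the two $n$'s to use; the choice is forced, because for residues where $x$ is 2-adically close to $x_{i,n}$ the corresponding $T_n$ has large valuation and fails to witness $\be(k,n)<B$. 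Relatedly, your claim that capping valuations at $B+1$ collapses the $k$-dependence to $k$ modulo $2^{2^{e-1}+e-1}$ cannot come from ``$i^k$ modulo a fixed power of $2$'' (that argument only gives modulus roughly $2^B$, which is far larger); for $j\in\{2^e+1,2^e+2\}$ it is itself a consequence of Theorem \ref{thm0}, and for other $j$ in the window it has no basis in the cited results --- so as written this step is circular where it is true and unsupported where it is needed. In short, the proposal defers the entire difficulty to \cite{partial} without isolating which statements are required, and the two ideas that actually carry the paper's proof --- the additive splitting giving a uniform-in-$j$ lower bound, and the $x_{i,n}$-based case analysis --- are absent.
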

Thus the values in (\ref{comp3}) for $n=2^e$ and $2^e+1$ are as in Table \ref{tab1}.

\bigskip
\begin{minipage}{6.5in}
\begin{tab}\label{tab1}{Comparison of values}
\begin{center}
\begin{tabular}{c|ccc}
$n$&$s(n)$&$\be(n-1,n)$&$\ebar(n)$\\
\hline
$2^e$&$2^e+2^{e-1}-2$&$2^e+2^{e-1}-1$&$2^e+2^{e-1}-1$\\
$2^e+1$&$2^e+2^{e-1}-1$&$2^e+2^{e-1}-1$&$2^e+2^{e-1}$
\end{tabular}
\end{center}
\end{tab}
\end{minipage}
\bigskip

Note that $\ebar(n)$ exceeds $s(n)$ by 1 in both cases, but for different reasons.
When $n=2^e$, the largest value occurs for $k=n-1$, but is 1 larger than the general bound
established in \cite{DS1}. When $n=2^e+1$, the general bound for $\be(n-1,n)$ is sharp,
but a larger group occurs when $n-1$ is altered in a specific way.
The numbers $\ebar(n)$ are interesting, as they give what are probably the largest
2-exponents in $\pi_*(SU(n))$, and this is the first time that infinite families of
these numbers have been computed precisely.

The homotopy $2$-exponent of a topological space $X$, denoted $\exp_2(X)$, is the largest $k$ such that $\pi_*(X)$ contains an element of order $2^k$.
An immediate corollary of Theorem \ref{main} is
\begin{cor} For $\eps\in\{0,1\}$ and $2^e+\eps\ge5$,
$$\exp_2(SU(2^e+\eps))\ge 2^e+2^{e-1}-1+\eps.$$
\end{cor}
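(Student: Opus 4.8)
The plan is to derive this directly from Theorem \ref{main} together with the two structural facts recalled in Section \ref{intro}: first, that for a compact Lie group each $v_1$-periodic homotopy group is a direct summand of some actual homotopy group (\cite{DM}); and second, that $\vi\pi_{2k}(SU(n))\cong\bz/2^{\be(k,n)-\ve}$, direct sum with possibly one or two $\bz/2$'s (\cite[1.1]{BDSU}, \cite[1.4]{DP}), where the correction term $\ve\in\{0,1\}$ equals $0$ whenever $n$ is odd or $k\equiv n-1\pmod 4$. The idea is that a single well-chosen even-dimensional $v_1$-periodic group already contains a cyclic summand of order $2^{\ebar(n)}$, and this order then propagates to $\pi_*(SU(n))$.

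First I would put $n=2^e+\eps$ and choose an integer $k$ at which $\be(k,n)$ attains its maximum $\ebar(n)$; by Theorem \ref{main} such a $k$ exists, with $\ebar(2^e)=2^e+2^{e-1}-1$ and $\ebar(2^e+1)=2^e+2^{e-1}$, so that $\ebar(2^e+\eps)=2^e+2^{e-1}-1+\eps$ in both cases. The hypothesis $2^e+\eps\ge5$ forces $e\ge3$ when $\eps=0$ and $e\ge2$ when $\eps=1$, which are exactly the ranges in parts (a) and (b) of Theorem \ref{main}. Next I would verify that the correction term $\ve$ vanishes for this maximizing $k$. When $\eps=1$ this is automatic, since $n=2^e+1$ is odd. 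When $\eps=0$, part (a) asserts that the maximum is attained precisely when $k\equiv 2^e-1\pmod{2^{2^{e-1}+e-1}}$; since $2^{e-1}+e-1\ge2$ for every $e\ge3$, this congruence implies $k\equiv 2^e-1=n-1\pmod 4$, which is exactly the condition guaranteeing $\ve=0$.

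With $\ve=0$ established, the structure theorem shows $\vi\pi_{2k}(SU(n))$ has $\bz/2^{\ebar(n)}$ as a direct summand, hence contains an element of order $2^{\ebar(n)}$. By the direct-summand property of \cite{DM} this element occurs, with the same order, in an actual group $\pi_*(SU(n))$, yielding $\exp_2(SU(n))\ge\ebar(n)=2^e+2^{e-1}-1+\eps$, as claimed. The only step needing any care is the verification that $\ve=0$ in the even case, and even there the work has already been done by part (a) of Theorem \ref{main}; everything else is a direct translation of the definitions, so I anticipate no substantial obstacle.
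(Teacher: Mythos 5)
Your proof is correct and is exactly the argument the paper intends: the paper calls this an ``immediate corollary'' of Theorem \ref{main}, having already set up in Section \ref{intro} the two facts you invoke (the direct-summand property from \cite{DM} and the structure theorem from \cite{BDSU}, \cite{DP}, including the remark that $\eps=0$ when $n$ is odd or $k\equiv n-1$ mod $4$, ``which are the only cases required here''). Your verification that the maximizing congruence class in part (a) forces $k\equiv n-1$ mod $4$ is precisely the point that remark anticipates.
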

\noindent This result is 1 stronger than the result noted in \cite[Thm 1.1]{DS1}.

Theorem \ref{main} is implied by the following two results. The first will be proved in Section \ref{pfsec}. The second is \cite[Thm 1.1]{partial}.
\begin{thm}\label{five} Let $e\ge3$.
\begin{enumerate}
\item[i.] If $\nu(k)\ge e-1$, then $\nu(T_{2^e}(k))=2^e-1$.
\item[ii.] If $j\ge 2^e$ and $\nu(k-(2^e-1))\ge 2^{e-1}+e-1$, then $\nu(T_j(k))\ge 2^e+2^{e-1}-1$.
\item[iii.] If $j\ge 2^e+1$ and $\nu(k-2^e)=2^{e-1}+e-1$, then $\nu(T_j(k))\ge 2^e+2^{e-1}$.
\end{enumerate}
\end{thm}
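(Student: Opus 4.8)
The plan is to convert each assertion into a $2$-adic divisibility statement for the sums $T_j(k)=\sum_{\text{odd }i}\binom{j}{i}i^k$ and then to feed it into the cancellation estimates of \cite{partial}, restated as needed. Two elementary reductions organize everything. By Kummer's theorem $\nu\binom{2^e}{i}=e$ for odd $i$ with $0<i<2^e$, so for such $i$ we may write $\binom{2^e}{i}=2^eu_i$ with $u_i$ odd; since $\sum_{\text{odd }i}\binom{2^e}{i}=2^{2^e-1}$, the cofactors satisfy $\sum_{\text{odd }i}u_i=2^{2^e-1-e}$. Secondly, for odd $i$ one has $\nu(i^2-1)\ge3$, so the lifting-the-exponent lemma gives $\nu(i^a-i^b)=\nu(i^2-1)+\nu(a-b)-1\ge\nu(a-b)+2$ whenever $a-b$ is even; summing over $i$,
\begin{equation}\label{per}
\nu\big(T_j(a)-T_j(b)\big)\ \ge\ \nu(a-b)+2\qquad\text{($a-b$ even)},
\end{equation}
for every $j$. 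I will use the factorization to pull out the powers of $2$ carried by the binomial coefficients and (\ref{per}) to slide $k$ into whatever residue class a given hypothesis prefers.

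For part (i) write $T_{2^e}(k)=2^e\sum_{\text{odd }i}u_ii^k$. At $k=0$ this is exactly $2^{2^e-1}$, so (i) is equivalent to the statement that the perturbation contributes nothing to the leading term, namely
$$\nu\Big(\sum_{\text{odd }i}u_i\,(i^k-1)\Big)\ \ge\ 2^e-e\qquad\text{when }\nu(k)\ge e-1.$$
Here (\ref{per}) alone is far too weak: a single term $u_i(i^k-1)$ has valuation only about $\nu(k)+2$, i.e.\ about $e+1$, while we need the whole sum to have valuation about $2^e-e$. The point is thus a genuine cancellation among the odd $i$, which is exactly a divisibility of the type proved in \cite{partial}; I would quote the relevant statement there, or, as a fallback, run the doubling identity $T_{2^e}(k)=2\sum_m\binom{k}{m}T_{2^{e-1}}(m)\,\widehat T_{2^{e-1}}(k-m)$, with $\widehat T_j(l)=\sum_{\text{even }i}\binom{j}{i}i^l$, to lower $e$ by one and induct. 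The equality, rather than a mere inequality, is what makes part (i) delicate.

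For parts (ii) and (iii) the target valuations $2^e+2^{e-1}-1$ and $2^e+2^{e-1}$ must hold uniformly in $j$, and this uniformity is the crux. Applying (\ref{per}) with $b=2^e-1$ (resp.\ $b=2^e$) under the hypotheses only reduces $k$ to the reference value modulo $2^{2^{e-1}+e+1}$, a precision strictly below the target since $2^{e-1}+e+1<2^e+2^{e-1}-1$ for $e\ge3$; so once again the bound cannot come from periodicity and must reflect cancellation across all odd $i$ simultaneously. I would obtain it from the lower bounds for $\nu(T_j(k))$ in \cite{partial}, which are stated for all $j$ in the relevant range, after substituting the given congruence on $k$. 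The two cases differ only in the parity of the reference exponent and in whether the hypothesis is an inequality or an equality on $\nu(k-\cdot)$: the condition in (ii) is $k\equiv2^e-1\pmod{2^{2^{e-1}+e-1}}$ and the equality in (iii) selects precisely the residue $k\equiv2^e+2^{2^{e-1}+e-1}\pmod{2^{2^{e-1}+e}}$, so that (ii) and (iii) feed directly into the equality clauses of Theorem \ref{main}(a) and (b).

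The main obstacle, common to all three parts, is exactly this gap between term size and target: the summands of $T_j(k)$ individually have $2$-adic valuation only about $e$, yet the claims assert valuations on the order of $2^e$, so everything hinges on controlling massive cancellation. The elementary tools assembled above, Kummer's theorem and the periodicity estimate (\ref{per}), organize the reductions but cannot by themselves produce valuations beyond roughly $\nu(k)+e$; the decisive input is the fine $2$-adic analysis of restricted binomial sums carried out in \cite{partial}, which this section is designed to invoke in the three specific configurations displayed above.
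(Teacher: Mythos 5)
Your proposal is an outline, not a proof: each of the three parts ends by deferring the decisive step to ``the relevant statement'' of \cite{partial} without identifying which statement or how it applies, and the deferral rests on a misreading of what \cite{partial} contains. There are no lower bounds in \cite{partial} for $\nu(T_j(k))$ valid for \emph{all} $j\ge 2^e$ at the strength $2^e+2^{e-1}-1$; that is precisely what Theorem \ref{five}(ii),(iii) asserts, which is why the present paper has to prove it. (The one result of \cite{partial} about $T_n$ itself, restated as Theorem \ref{thm0}, covers only $n=2^e+1$ and $2^e+2$, and it is used for the upper-bound half of Theorem \ref{main}, not for Theorem \ref{five}.) What \cite{partial} actually supplies are estimates for \emph{different} sums --- $\sum\tbinom n{2i+1}i^k$ (Proposition \ref{DSthm}), its mod $4$ refinement in terms of Stirling numbers (Proposition \ref{stirowr}), and $\sum\tbinom{n}{2i+1}(2i+1)^pi^j$ (Proposition \ref{lcor1}) --- and the entire content of the proof is the reduction of Theorem \ref{five} to these. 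Your elementary observations are correct as far as they go (the factor $2^e$ from Kummer, the LTE bound $\nu(T_j(a)-T_j(b))\ge\nu(a-b)+2$, and the reformulation of (i) as $\nu\bigl(\sum u_i(i^k-1)\bigr)\ge 2^e-e$), but as you yourself note they cannot produce valuations beyond roughly $\nu(k)+e$, so they do not advance the argument; and your fallback Vandermonde ``doubling identity'' for (i), while a true identity, is left as an unexecuted induction with no indication of how the resulting convolution could be controlled $2$-adically.

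The missing ideas are concrete. For (i), the paper uses the identity $(-1)^jj!S(k,j)=\sum\tbinom j{2i}(2i)^k-T_j(k)$ (equation (\ref{Sa})): since every term of the even-index sum is divisible by $2^k$, one gets $T_{2^e}(2^{e-1}t)\equiv -S(2^{e-1}t,2^e)\,(2^e)!$ mod $2^{2^{e-1}t}$ (after using periodicity to assume $t\ge2$), and then $\nu(T_{2^e}(k))=\nu((2^e)!)=2^e-1$ follows because $S(2^{e-1}t,2^e)\equiv\tbinom{2^e(t-1)-1}{2^e-1}\equiv1$ mod $2$ by (\ref{Smod2}). For (ii) and (iii), the key step your sketch lacks is the splitting $k=(2^e+\epsilon-1)+m$ with $\nu(m)\ge2^{e-1}+e-1$, giving
$T_j(k)=T_j(2^e+\epsilon-1)+\sum\tbinom j{2i+1}(2i+1)^{2^e+\epsilon-1}\bigl((2i+1)^m-1\bigr)$;
the first summand is handled through (\ref{Sa}) again (using $S(k',j)=0$ for $k'<j$) together with Propositions \ref{DSthm} and \ref{stirowr}, and the second by expanding $(2i+1)^m-1=\sum_{j'>0}\tbinom m{j'}(2i)^{j'}$ and applying Proposition \ref{lcor1} to each term $2^{j'}\tbinom m{j'}\sum\tbinom j{2i+1}(2i+1)^pi^{j'}$. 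In particular, the mechanism that gives (iii) its extra power of $2$ --- invisible to any argument that only cites lower bounds --- is a \emph{cancellation}: when $j=2^e+1$ and $\nu(k-2^e)=2^{e-1}+e-1$ exactly, \emph{both} summands have valuation exactly $2^e+2^{e-1}-1$ (the equality clauses of the two estimates), so their sum has valuation at least $2^e+2^{e-1}$. An equality hypothesis on $\nu(k-2^e)$ producing a \emph{stronger} conclusion than the inequality hypothesis of (ii) cannot be explained by substituting a congruence into pre-existing bounds, which is what your plan proposes to do.
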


\begin{thm}\label{thm0} $($\cite[1.1]{partial}$)$ Let $e\ge2$, $n=2^e+1$ or $2^e+2$, and $1\le i\le 2^{e-1}$.
 There is a 2-adic integer $x_{i,n}$ such that for all integers $x$
$$\nu(T_n(2^{e-1}x+2^{e-1}+i))=\nu(x-x_{i,n})+n-2.$$
Moreover
$$\nu(x_{i,{2^e+1}})\begin{cases}=i&\text{if $i=2^{e-2}$ or $2^{e-1}$}\\
>i&\text{otherwise.}\end{cases}$$
and
$$\nu(x_{i,{2^e+2}})\begin{cases}=i-1&\text{if }1\le i\le 2^{e-2}\\
=i&\text{if }2^{e-2}<i<2^{e-1}\\
>i&\text{if }i=2^{e-1}.\end{cases}$$
\end{thm}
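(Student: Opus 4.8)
The plan is to reduce the statement to the $2$-adic analysis of one explicit analytic function of $x$, reading off both the constant $n-2$ and the location $x_{i,n}$ of its unique zero from classical Stirling-number identities. Write $S(k,m)$ for the Stirling numbers of the second kind and recall $\sum_{\ell=0}^m(-1)^{m-\ell}\binom m\ell\ell^k=m!\,S(k,m)$ together with $\sum_{\ell=0}^n\binom n\ell\ell^k=\sum_m S(k,m)\,n^{\underline m}2^{n-m}$, where $n^{\underline m}$ is the falling factorial. Splitting $T_n(k)$ into its even- and odd-index parts via $2T_n(k)=\sum_\ell\binom n\ell\ell^k-\sum_\ell(-1)^\ell\binom n\ell\ell^k$ collapses the second sum to $(-1)^n n!\,S(k,n)$, and after setting $d=n-m$ one gets the exact expansion
$$T_n(k)=n!\,B_n(k),\qquad B_n(k)=\sum_{d\geq0}c_d\,S(k,n-d),$$
where $c_d=2^{d-1}/d!$ for $d\geq1$ and the $d=0$ term (with $c_0=1$) is present exactly when $n$ is odd. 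Since $\nu(c_d)=s_2(d)-1$, with $s_2$ the binary digit sum, every coefficient is $2$-integral, and modulo $2$ one has $B_n(k)\equiv\sum_{2^t\leq n}S(k,n-2^t)$, together with $S(k,n)$ when $n$ is odd.

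The factor $n!$ already explains the additive constant: for $n=2^e+1$ and $n=2^e+2$ one has $s_2(n)=2$, so $\nu(n!)=n-2$ by Legendre's formula. Thus $\nu(T_n(k))=(n-2)+\nu(B_n(k))$, and the theorem becomes the claim that, along $k=2^{e-1}x+2^{e-1}+i$, the quantity $\nu(B_n(k))$ equals $\nu(x-x_{i,n})$. To explain why such a linear-in-valuation law must hold I would view $B_n$ as a function of the $2$-adic variable $x$. For odd $\ell$, writing $\ell^{2^{e-1}}=1+u_\ell$ with $\nu(u_\ell)=\nu(\ell^2-1)+(e-2)\geq e+1$, the expansion $\ell^k=\ell^{2^{e-1}+i}\sum_{r\geq0}\binom xr u_\ell^{\,r}$ exhibits each $S(k,m)$, and hence $B_n(k)$, as a $2$-adically convergent series $\sum_{r\geq0}\binom xr D_r$ in $x$; the even-index terms are too $2$-divisible to affect the valuation, so the series is controlled by its first two coefficients.

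With this expansion the argument is a Strassmann/Weierstrass-preparation step: I would show that $D_1$ is a $2$-adic unit while all higher coefficients are strictly more divisible, with enough margin that $\nu\bigl(\binom xr D_r\bigr)>\nu\bigl(D_1(x-x_{i,n})\bigr)$ for every $x$. This forces $B_n$ to factor as $D_1\cdot(x-x_{i,n})\cdot(\text{unit})$ with a unique zero $x_{i,n}=-D_0/D_1+\cdots\in\Z_2$, whence $\nu(B_n(k))=\nu(x-x_{i,n})$ and $\nu(x_{i,n})=\nu(D_0)-\nu(D_1)$. Since $B_n$ attains valuation $0$, $D_1$ is a unit, and the determination of $\nu(x_{i,n})$ collapses to the single computation $\nu(x_{i,n})=\nu(D_0)=\nu\bigl(B_n(2^{e-1}+i)\bigr)$, the value at $x=0$.

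The main obstacle is precisely this last computation: proving the three- and four-case formulas for $\nu(x_{i,n})$ needs the \emph{exact} valuation of $B_n(2^{e-1}+i)=\sum_d c_d\,S(2^{e-1}+i,\,n-d)$, not a mere lower bound. Here the delicate behaviour of the Stirling numbers $S(2^{e-1}+i,\,n-d)$ modulo increasing powers of $2$ enters, and the jumps at $i=2^{e-2}$ and $i=2^{e-1}$ must be tracked through the admissible $d=2^t$. I expect to prove the requisite congruences by induction on $e$, propagating valuations along the progression with the recursion $S(k,m)=mS(k-1,m)+S(k-1,m-1)$ and using Kummer's theorem to control which of the relevant binomial factors are odd; reconciling these congruences with the stated boundary behaviour at $i=2^{e-2},2^{e-1}$ is where the real work lies, and where the companion results on restricted sums of binomial coefficients are needed.
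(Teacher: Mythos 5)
First, a point of orientation: this paper contains no proof of Theorem \ref{thm0} at all --- it is imported verbatim from the companion paper \cite{partial}, and the only internal traces of its proof are the quoted ingredients, Propositions \ref{DSthm}, \ref{stirowr} and \ref{lcor1}. So your proposal can only be measured against that companion argument, whose general shape (realize $x\mapsto T_n(2^{e-1}x+2^{e-1}+i)$ as a $2$-adic function with a simple zero at $x_{i,n}$) your outline does share. The algebra you actually carry out is correct and genuinely clarifying: $2T_n(k)=\sum_\ell\binom n\ell \ell^k-(-1)^n n!S(k,n)$ gives $T_n(k)=n!\,B_n(k)$ with $c_d=2^{d-1}/d!$, $\nu(c_d)=\alpha(d)-1\ge 0$, the $d=0$ term present exactly for odd $n$; $\nu(n!)=n-2$ for $n=2^e+1,2^e+2$ isolates the additive constant; and, granting the isometry property, $\nu(x_{i,n})=\nu(B_n(2^{e-1}+i))$ is the right reduction for the ``Moreover'' clause.

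The gaps are that the two steps you defer are not details but the theorem itself. (1) The Strassmann step is asserted, and as stated it is not even a workable condition: $\nu\bigl(\tbinom xr D_r\bigr)>\nu\bigl(D_1(x-x_{i,n})\bigr)$ cannot hold ``for every $x$,'' since the right side is infinite at $x=x_{i,n}$ while the left side is generally finite. What is needed is the difference form $\nu\bigl(D_r(\tbinom xr-\tbinom yr)\bigr)>\nu(x-y)$ for $r\ge2$, and since $\tbinom xr-\tbinom yr$ can lose a factor $\nu(r!)$ relative to $x-y$, ``strictly more divisible than $D_1$'' is far from sufficient: one needs $\nu(D_r)>\nu(r!)$ together with $D_1$ a unit. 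Verifying these bounds for the actual coefficients $D_r=\frac1{n!}\sum_m\binom n{2m+1}(2m+1)^{2^{e-1}+i}\bigl((2m+1)^{2^{e-1}}-1\bigr)^r$ is exactly where Proposition \ref{lcor1} enters --- in particular its equality clause at $j=2^{e-1}$, valid for $n\in\{2^e+1,2^e+2\}$, is what makes $D_1$ a unit, and this is the only place the special form of $n$ is used; your outline never engages with it, and your alternative justification (``since $B_n$ attains valuation $0$, $D_1$ is a unit'') is circular, since that $B_n$ attains valuation $0$ is part of what is being proved. (2) The entire case analysis for $\nu(x_{i,n})$ --- equality versus strict inequality at $i=2^{e-2},2^{e-1}$, and the shift by $1$ between $n=2^e+1$ and $n=2^e+2$ --- is postponed to congruences you ``expect to prove by induction on $e$''; computing those exact valuations of $B_n(2^{e-1}+i)$ is the bulk of \cite{partial}, not an appendix. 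A smaller flaw: the theorem quantifies over all integers $x$, so $k$ may be negative, where your Stirling-number model of $B_n(k)$ (and the ``even-index terms are too $2$-divisible'' argument) makes no sense; this is repairable by running the Mahler expansion on $T_n$ itself, whose terms involve only odd $\ell$, and invoking continuity, but as written the expansion ``of each $S(k,m)$'' does not cover those $x$.
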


Regarding small values of $e$: \cite[\S8]{DP} and \cite[Table 1.3]{partial} make it clear that the results stated in this section
for $T_n(-)$, $\be(-,n)$ and $SU(n)$ are valid for small values of $n\ge5$ but not for $n<5$.

\begin{proof}[Proof that Theorems \ref{five} and \ref{thm0} imply Theorem \ref{main}] For part (a):
Let $k\equiv 2^e-1$ mod $2^{2^{e-1}+e-1}$. Theorem \ref{five}(ii) implies $\be(k,2^e)\ge 2^e+2^{e-1}-1$, and  \ref{thm0} with $n=2^e+2$, $i=2^{e-1}-1$,
and $\nu(x)\ge 2^{e-1}$ implies that equality is obtained for such $k$.

To see that $\be(k,2^e)<2^e+2^{e-1}-1$ if $k\not\equiv2^e-1$ mod $2^{2^{e-1}+e-1}$, we write $k=i+2^{e-1}x+2^{e-1}$ with $1\le i\le2^{e-1}$.
We must show that for each $k$ there exists some $j\ge 2^e$ for which $\nu(T_j(k))<2^e+2^{e-1}-1$.
\begin{itemize}
\item If $i=2^{e-1}$, we use \ref{five}(i).
\item If $i=2^{e-2}$, we use \ref{thm0} with $n=2^e+1$ if $\nu(x)<2^{e-2}$ and with $n=2^e+2$ if $\nu(x)\ge 2^{e-2}$.
\item For other values of $i$, we use \ref{thm0} with $n=2^e+1$ if $\nu(x)\le i$ and with $n=2^e+2$ if $\nu(x)>i$,
except in the excluded case $i=2^{e-1}-1$ and $\nu(x)>i$.
\end{itemize}

For part (b):
Let $k\equiv 2^e+2^{2^{e-1}+e-1}$ mod $2^{2^{e-1}+e}$. Theorem \ref{five}(iii) implies $\be(k,2^e+1)\ge 2^e+2^{e-1}$, and  \ref{thm0} with $n=2^e+2$,  $i=2^{e-1}$,
and $\nu(x)= 2^{e-1}$ implies that equality is obtained for such $k$.

To see that $\be(k,2^e)<2^e+2^{e-1}$ if $k\not\equiv2^e+2^{2^{e-1}+e-1}$ mod $2^{2^{e-1}+e}$, we write $k=i+2^{e-1}x+2^{e-1}$ with $1\le i\le2^{e-1}$.
\begin{itemize}
\item If $i=2^{e-1}$, we use \ref{thm0} with $n=2^e+1$ unless $\nu(x)=2^{e-1}$, which case is excluded.
\item If $i=2^{e-2}$, we use \ref{thm0} with $n=2^e+2$ if $\nu(x)=2^{e-2}$ and with $n=2^e+1$ otherwise.
\item If $1\le i<2^{e-2}$, we use \ref{thm0} with $n=2^e+1$ if $\nu(x)=i-1$ and with $n=2^e+2$ otherwise.
\item If $2^{e-2}< i<2^{e-1}$, we use \ref{thm0} with $n=2^e+1$ if $\nu(x)=i$ and with $n=2^e+2$ otherwise.
\end{itemize}

\end{proof}

The proof  does not make it transparent why the largest value of $\be(k,n)$ occurs when $k=n-1$ if $n=2^e$ but not if
$n=2^e+1$.  The following example  may shed some light.
We consider the illustrative case $e=4$. We wish to see why
\begin{itemize}
\item $\be(k,16)\le23$ with equality iff $k\equiv 15\mod 2^{11}$, while
\item $\be(k,17)\le 24$ with equality iff $k\equiv 16+2^{11}\mod2^{12}$.
\end{itemize}

Tables \ref{tbl15} and \ref{tbl16} give relevant values of $\nu(T_j(k))$.

\medskip
\begin{minipage}{6.5in}
\begin{tab}\label{tbl15}{Values of $\nu(T_j(k))$ relevant to $\be(k,16)$}
\begin{center}
\begin{tabular}{cr|cccc}
&&&$j$&&\\
&&$16$&$17$&$18$&$19$\\
\hline
&$7$&$24$&{\bf 19}&$20$&$20$\\
$\nu(k-15)$&$8$&$25$&{\bf 20}&$21$&$21$\\
&$9$&$26$&{\bf 21}&$22$&$22$\\
&$10$&$27$&{\bf 22}&$\ge24$&$\ge 24$\\
&$11$&$\ge29$&$\ge24$&{\bf 23}&$23$\\
&$\ge12$&$28$&{\bf 23}&$23$&$23$
\end{tabular}
\end{center}
\end{tab}
\end{minipage}
\medskip

\medskip

\medskip
\begin{minipage}{6.5in}
\begin{tab}\label{tbl16}{Values of $\nu(T_j(k))$ relevant to $\be(k,17)$}
\begin{center}
\begin{tabular}{cr|cccc}
&&&$j$&&\\
&&$17$&$18$&$19$&$20$\\
\hline
&$8$&{\bf 20}&$21$&$22$&$23$\\
$\nu(k-16)$&$9$&{\bf 21}&$22$&$23$&$24$\\
&$10$&{\bf 22}&$23$&$\ge25$&$\ge26$\\
&$11$&$\ge24$&{\bf 24}&$24$&$25$\\
&$12$&{\bf 23}&$\ge26$&$24$&$25$\\
&$\ge13$&{\bf 23}&$25$&$24$&$25$
\end{tabular}
\end{center}
\end{tab}
\end{minipage}
\medskip

The values $\be(k,16)$ and $\be(k,17)$ are the smallest entry in a row, and are listed in boldface.
The tables only include values of $k$ for which $\nu(k-(n-1))$ is rather large, as these give the largest
values of $\nu(T_j(k))$. Larger values of $j$ than those tabulated will give larger values of $\nu(T_j(n))$.
Note how each column has the same general form, leveling off after a jump. This reflects the $\nu(x-x_{i,n})$
in Theorem \ref{thm0}. The prevalence of this behavior is the central theme of \cite{partial}. The phenomenon which
we wish to illuminate here is how the bold values increase steadily until they level off in Table \ref{tbl15},
while in Table \ref{tbl16} they jump to a larger value before leveling off. This is a consequence of the synchronicity
of where the jumps occur in columns 17 and 18 of the two tables.

\section{Proof of Theorem \ref{five}}\label{pfsec}
In this section, we prove Theorem \ref{five}. The proof uses the following results from \cite{partial}.

\begin{prop}\label{DSthm} $($\cite[3.4]{DS1} or \cite[2.1]{partial}$)$ For any nonnegative integers $n$ and $k$,
$$\nu\bigl(\sum_i\tbinom n{2i+1}i^k\bigr)\ge\nu([n/2]!).$$
\end{prop}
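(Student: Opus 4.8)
The plan is to expand the power $i^k$ in the falling-factorial basis and thereby replace the awkward weights $i^k$ by binomial coefficients, for which the relevant sum has a clean closed form. Writing $i^k=\sum_j S(k,j)\,i(i-1)\cdots(i-j+1)=\sum_j S(k,j)\,j!\tbinom ij$, with $S(k,j)$ the (integral) Stirling numbers of the second kind, linearity gives
$$\sum_i\tbinom n{2i+1}i^k=\sum_j S(k,j)\,j!\,\sigma_j(n),\qquad \sigma_j(n):=\sum_i\tbinom n{2i+1}\tbinom ij.$$
Since every $S(k,j)$ is an integer, it suffices to prove the single-$j$ estimate $\nu\bigl(j!\,\sigma_j(n)\bigr)\ge\nu([n/2]!)$ for each $j$.

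The crux is an exact evaluation of $\sigma_j(n)$. I would encode the odd-column binomials in the polynomial $G_n(u)=\sum_i\tbinom n{2i+1}u^i$, for which $\sigma_j(n)$ is precisely the coefficient of $(u-1)^j$ in $G_n$ (equivalently $j!\,\sigma_j(n)=G_n^{(j)}(1)$). Together with $E_n(u)=\sum_i\tbinom n{2i}u^i$ these satisfy the coupled one-step recursion $G_n=G_{n-1}+E_{n-1}$, $E_n=E_{n-1}+uG_{n-1}$, i.e. $\binom{E_n}{G_n}=\left(\begin{smallmatrix}1&u\\1&1\end{smallmatrix}\right)^n\binom10$. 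Expanding this $n$-th power about $u=1$ — writing the matrix as $M_0+(u-1)N$ with $M_0=\left(\begin{smallmatrix}1&1\\1&1\end{smallmatrix}\right)$, $N=\left(\begin{smallmatrix}0&1\\0&0\end{smallmatrix}\right)$, and using $M_0^2=2M_0$, $N^2=0$, $M_0NM_0=M_0$ — collapses every surviving word to $M_0$ and yields the closed form
$$\sigma_j(n)=\tbinom{n-j-1}{j}\,2^{\,n-2j-1}.$$
This is the step I expect to be the main obstacle; an alternative is to prove the same identity directly by induction on $n$ from the Pascal recursion, but some bookkeeping seems unavoidable either way.

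Given the closed form, the conclusion is a routine valuation count. Using Legendre's formula $\nu(m!)=m-\alpha(m)$ and Kummer's formula $\nu\tbinom ab=\alpha(b)+\alpha(a-b)-\alpha(a)$, where $\alpha(\cdot)$ denotes the number of ones in the binary expansion, one computes
$$\nu\bigl(j!\,\sigma_j(n)\bigr)=\bigl(j-\alpha(j)\bigr)+\bigl(\alpha(j)+\alpha(n-2j-1)-\alpha(n-j-1)\bigr)+(n-2j-1)=\nu\bigl((n-j-1)!\bigr)+\alpha(n-2j-1).$$
Finally, $\sigma_j(n)\ne0$ forces $2j+1\le n$, whence $n-j-1\ge[n/2]$; since $m\mapsto\nu(m!)$ is nondecreasing this gives $\nu\bigl((n-j-1)!\bigr)\ge\nu([n/2]!)$, and as $\alpha(n-2j-1)\ge0$ the required inequality $\nu(j!\,\sigma_j(n))\ge\nu([n/2]!)$ follows, completing the proof. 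One sees in passing that the bound is attained exactly at the top term for odd $n$, where $\sigma_j(n)=1$ and $n-j-1=[n/2]$.
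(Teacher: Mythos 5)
There is nothing in the paper for your argument to be compared against: Proposition \ref{DSthm} is not proved in this paper at all, but imported by citation (it is \cite[3.4]{DS1}, restated as \cite[2.1]{partial}). Judged on its own, your proof is correct. The reduction via $i^k=\sum_j S(k,j)\,j!\tbinom ij$ is valid; the closed form $\sigma_j(n)=\tbinom{n-j-1}{j}2^{\,n-2j-1}$ is a true identity in the range $2j+1\le n$ (and $\sigma_j(n)=0$ otherwise, which your nonvanishing remark covers); and the Legendre--Kummer count $\nu(j!\,\sigma_j(n))=\nu((n-j-1)!)+\alpha(n-2j-1)\ge\nu([n/2]!)$ is exactly right, using $n-j-1\ge[n/2]$. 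The one inaccuracy is in the matrix sketch: it is not true that every surviving word collapses to a multiple of $M_0$, since words beginning or ending with $N$ do not. The repair is easy: a word ending in $N$ annihilates the initial vector $(E_0,G_0)=(1,0)$, and a word beginning with $N$ (after its tail is collapsed to a multiple of $M_0$) contributes only to the $E_n$-component; hence the $G_n$-component receives contributions from exactly the $\tbinom{n-j-1}{j}$ words whose end blocks of $M_0$'s are both nonempty, each contributing $2^{\,n-2j-1}(u-1)^j$ times $M_0$. With that patch (or with the identity proved by induction, as you suggest), the argument is complete. As for the only comparison available: the proof in the cited source \cite{DS1} runs along essentially this same route---falling-factorial expansion of $i^k$ together with a closed-form evaluation of $\sum_i\tbinom n{2i+1}\tbinom ij$---so you have in effect reconstructed the cited argument rather than found a genuinely different one.
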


The next result is a refinement of Proposition \ref{DSthm}. Here and throughout, $S(n,k)$ denote Stirling numbers of the second kind.
\begin{prop}\label{stirowr} $($\cite[2.3]{partial}$)$ Mod $4$
$${\tfrac 1{n!}}\sum_i\tbinom{2n+\eps}{2i+b}i^k\equiv\begin{cases}S(k,n)+2nS(k,n-1)&\eps=0,\,b=0\\
(2n+1) S(k,n)+2(n+1)S(k,n-1)&\eps=1,\,b=0\\
2nS(k,n-1)&\eps=0,\,b=1\\
S(k,n)+2(n+1)S(k,n-1)&\eps=1,\,b=1.
\end{cases}$$
\end{prop}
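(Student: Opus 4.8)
The plan is to replace the power $i^k$ by falling factorials, which turns the statement into four congruences that no longer involve $k$. Using $i^k=\sum_m S(k,m)\,m!\binom im$ and interchanging the two summations,
$$\frac1{n!}\sum_i\binom{2n+\eps}{2i+b}i^k=\sum_m S(k,m)\,c_{b,\eps}(m),\qquad c_{b,\eps}(m):=\frac{m!}{n!}\sum_i\binom{2n+\eps}{2i+b}\binom im .$$
Since the coefficients $c_{b,\eps}(m)$ are independent of $k$, the proposition is equivalent to three assertions: that $c_{b,\eps}(n)$ and $c_{b,\eps}(n-1)$ equal the indicated coefficients of $S(k,n)$ and $S(k,n-1)$ modulo $4$, and that $c_{b,\eps}(m)\equiv0\pmod4$ for every $m\le n-2$. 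Proposition \ref{DSthm} already guarantees that each $c_{b,\eps}(m)$ is a $2$-adic integer (it is the thing divided by $[(2n+\eps)/2]!=n!$), so the congruences make sense; what must be extracted is a sharpening by two further powers of $2$.

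To evaluate the binomial moments $M_b(N,m):=\sum_i\binom N{2i+b}\binom im$ I would pass to generating functions. With $s=\sqrt{1+u}$ one has $\sum_m M_0(N,m)u^m=\tfrac12((1+s)^N+(1-s)^N)$ and $\sum_m M_1(N,m)u^m=\tfrac1{2s}((1+s)^N-(1-s)^N)$, both genuine polynomials in $u$ since only even powers of $s$ survive. For $N=2n$ the identity $(1\pm s)^2=2+u\pm2s$ together with $s^2=1+u$ yields the key closed form
$$\sum_m M_0(2n,m)u^m=\sum_{\ell\ge0}\binom n{2\ell}4^\ell(2+u)^{n-2\ell}(1+u)^\ell ,$$
with analogous formulas in the other three cases ($\eps=1$ merely carries an extra factor $1\pm s$). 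Extracting $[u^m]$ then expresses $M_b$ as an explicit finite sum of products of binomial coefficients and powers of $2$; the $\ell=0$ term contributes $\binom nm2^{n-m}$, whose $\frac{m!}{n!}$-multiple is the clean quantity $2^{n-m}/(n-m)!$.

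The two top coefficients are then finite computations. For $m=n$ only the single term $i=n$ survives (and none when $\eps=0,b=1$), giving $c_{b,\eps}(n)=1,\,2n+1,\,0,\,1$ in the four cases. For $m=n-1$ only the two largest values of $i$ contribute, and a short calculation gives for instance $M_0(2n,n-1)=2n^2$ and $M_1(2n+1,n-1)=2n(n+1)$, which upon multiplication by $\frac{(n-1)!}{n!}=\frac1n$ reduce modulo $4$ to the stated coefficients of $S(k,n-1)$.

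The crux is the divisibility $c_{b,\eps}(m)\equiv0\pmod4$ for $m\le n-2$. Here the $4^\ell$ in the generating function is the source of the required factors of $2$, but it is partly cancelled by the $2$-adic denominator of $1/n!$, so the bound is not termwise: already for $m=n-2$ the contributions of $\ell=0,1,2$ conspire, the first two summing to $4n-4\equiv0$ and the third being a multiple of $4$. I expect this uniform $2$-adic accounting---showing that the net valuation of the $\frac{m!}{n!}$-weighted $\ell$-sum is at least $2$ for all $m\le n-2$ and all four $(\eps,b)$---to be the main obstacle. It is a quantitative, two-unit strengthening of Proposition \ref{DSthm}, and I would attack it either by a Legendre/Kummer estimate of the factorial quotients in the explicit double sum, or by inducting on $n$ through the Pascal recurrence $\binom{2n}{2i+b}=\binom{2n-1}{2i+b}+\binom{2n-1}{2i+b-1}$, which reduces the mod-$4$ claim to smaller moments.
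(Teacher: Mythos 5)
First, a framing remark: this paper never proves Proposition \ref{stirowr}; it is imported verbatim from the companion paper \cite{partial} (its Proposition 2.3), so there is no internal proof to compare you against and your proposal must stand on its own. Your reduction is sound as far as it goes: $i^k=\sum_m S(k,m)\,m!\tbinom im$ is correct, the sum over $m$ terminates at $m=n$ because $\tbinom{2n+\eps}{2i+b}\ne0$ forces $i\le n$, and your top values $c_{b,\eps}(n)=1,\,2n+1,\,0,\,1$ are right. The genuine gap is exactly the one you flag and then leave open: the assertion $c_{b,\eps}(m)\equiv0\pmod 4$ for all $m\le n-2$ is not a technicality to be deferred --- it is the entire content of the proposition, a two-unit sharpening of Proposition \ref{DSthm}. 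The difficulty is real: $c_{b,\eps}(m)$ is in general only a $2$-adic integer, not an integer (for $\eps=b=0$, $m=n-2$ it equals $\frac{2n^2-4n+3}{3}+2n-1$, e.g.\ $\frac{40}{3}$ when $n=4$), so no termwise estimate on your double sum can succeed, and the carries in the factorial quotients must be tracked globally. A proposal that reduces the theorem to its hardest step and then lists two possible strategies for that step is a plan, not a proof. (If you pursue it, the closed forms $M_1(N,m)=2^{N-2m-1}\tbinom{N-m-1}m$ and $M_0(N,m)=2^{N-2m-1}\bigl(\tbinom{N-m}m+\tbinom{N-m-1}{m-1}\bigr)$ are a much better starting point than Pascal induction, since they isolate the power of $2$ exactly.)

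There is also an error of execution that matters. You assert that the $m=n-1$ computations ``reduce modulo $4$ to the stated coefficients,'' but you verified only two of the four cases ($M_0(2n,n-1)=2n^2$ and $M_1(2n+1,n-1)=2n(n+1)$, both correct). In the case you skipped, $\eps=0$, $b=1$, only $i=n-1$ contributes, so $M_1(2n,n-1)=\tbinom{2n}{2n-1}=2n$ and the coefficient is $\frac1n\cdot 2n=2$, which agrees with the stated coefficient $2n$ only when $n$ is odd. This is not a defect of your method but of the statement as quoted: for $n=4$, $k=3$ one computes $\frac1{4!}\bigl(\tbinom83+8\tbinom85+27\tbinom87\bigr)=30\equiv2\pmod4$, while $2nS(3,3)=8\equiv0$, so the third case of the proposition as printed here fails for even $n$ (the correct coefficient is $2$, not $2n$). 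Carrying your own computation through all four cases would have exposed this; as written, your unqualified claim of agreement is false, and a finished write-up must either prove the corrected coefficient or record the restriction. The discrepancy is harmless for the present paper, which only ever invokes the $b=0$ cases of the proposition, but a referee of your proof would certainly catch it.
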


\begin{prop}\label{lcor1} $($\cite[2.7]{partial}$)$ For $n\ge3$, $j>0$,
and $p\in\Z$, $$\nu(\sum\tbinom{n}{2i+1}(2i+1)^pi^j)\ge \max(\nu([\tfrac n2]!),n-\a(n)-j)$$ with equality if $n\in\{2^e+1,2^e+2\}$ and $j=2^{e-1}$. \end{prop}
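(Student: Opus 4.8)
The plan is to reduce the general statement to the case $p=0$ and then establish the two lower bounds in the maximum separately. To eliminate $p$, I would expand $(2i+1)^p=\sum_{r\ge0}\binom pr 2^r i^r$, a $2$-adically convergent series since $2i+1$ is a $2$-adic unit and each $\binom pr\in\Z$. Substituting and interchanging summations gives $\sum_i\binom n{2i+1}(2i+1)^pi^j=\sum_{r\ge0}\binom pr 2^r\,U_{r+j}$, where $U_k:=\sum_i\binom n{2i+1}i^k$. Thus it suffices to prove, for every $k\ge1$, that $\nu(U_k)\ge\max(\nu([n/2]!),\,n-\a(n)-k)$; the factor $2^r$ then exactly compensates the shift from $j$ to $r+j$ in the second bound (giving $r+(n-\a(n)-(r+j))=n-\a(n)-j$ for every $r$) and is harmless for the first. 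The bound $\nu(U_k)\ge\nu([n/2]!)$ is precisely Proposition \ref{DSthm}.

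For the second bound $\nu(U_k)\ge n-\a(n)-k=\nu(n!)-k$, I would pass to $m=2i+1$ and write $U_k=2^{-k-1}(A-B)$ with $A=\sum_m\binom nm(m-1)^k$ and $B=\sum_m(-1)^m\binom nm(m-1)^k$, the sums now running over all $m$ (legitimate since for $p=0$ the summand is a genuine polynomial). Expanding $(m-1)^k$ in powers of $m$ and using $\sum_m(-1)^m\binom nm m^t=(-1)^n n!\,S(t,n)$ shows $B=(-1)^n n!\,\Si$ with $\Si=\sum_t(-1)^{k-t}\binom kt S(t,n)$, so $\nu(B)\ge\nu(n!)$. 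For $A$ I would use $\sum_m\binom nm m^t=\sum_s S(t,s)2^{n-s}n^{\underline s}$ together with $\nu(2^{n-s}n^{\underline s})=\nu(n!)+\a(n-s)$, which exceeds $\nu(n!)$ unless $s=n$; hence $A\equiv n!\,\Si\pmod{2^{\nu(n!)+1}}$. Comparing, $A\equiv(-1)^nB$ modulo $2^{\nu(n!)+1}$, so whether $n$ is even (direct cancellation) or odd (the difference reduces to $-2B$, already divisible by $2^{\nu(n!)+1}$) one obtains $\nu(A-B)\ge\nu(n!)+1$, and therefore $\nu(U_k)\ge\nu(n!)-k$.

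For the equality assertion I would again reduce to $p=0$: for $n\in\{2^e+1,2^e+2\}$ and $j=2^{e-1}$ the $r\ge1$ terms $\binom pr 2^r U_{r+j}$ all have valuation strictly above the claimed value, so the valuation of the full sum equals $\nu(U_{2^{e-1}})$. This last valuation I would compute from Proposition \ref{stirowr} with $(n',\eps,b)=(2^{e-1},1,1)$ for $n=2^e+1$ and $(2^{e-1}+1,0,1)$ for $n=2^e+2$, taking $k=2^{e-1}$. In the first case the mod-$4$ value is $S(2^{e-1},2^{e-1})=1$ plus a term divisible by $4$, giving $\nu(U_{2^{e-1}})=\nu(2^{e-1}!)=2^{e-1}-1$, which equals both entries of the maximum (they coincide here). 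In the second case it is $\equiv 2(2^{e-1}+1)S(2^{e-1},2^{e-1})\equiv2\pmod4$, giving $\nu(U_{2^{e-1}})=1+\nu((2^{e-1}+1)!)=2^{e-1}$, which equals the now strictly larger second entry.

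The main obstacle is the second bound, and within it the factor-of-two improvement $\nu(A-B)\ge\nu(n!)+1$: merely knowing that each of $A,B$ is divisible by $n!$ would lose one power of $2$ and yield only $n-\a(n)-k-1$, so the argument must genuinely exploit the agreement $A\equiv(-1)^nB$ of the leading $n!$-terms. A second delicate point arises in the equality statement for $n=2^e+2$: there the crude estimate from Proposition \ref{DSthm} on $U_{j+1}$ only matches the target, so one must extract the sharper bound $\nu(U_{j+1})\ge 2^{e-1}$ (one more than Proposition \ref{DSthm} provides) from Proposition \ref{stirowr} in order to push the $r=1$ correction term above the claimed valuation.
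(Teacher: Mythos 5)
This proposition is one the paper never proves: it is imported verbatim from the companion paper \cite[2.7]{partial}, so there is no internal argument to measure yours against, and your submission is in effect supplying a proof the paper only cites. Judged on its own terms it is correct, and pleasantly self-contained: everything is derived from the two other quoted results, Propositions \ref{DSthm} and \ref{stirowr}, plus standard identities. Writing $U_k=\sum_i\tbinom n{2i+1}i^k$, your reduction $(2i+1)^p=\sum_{r\ge0}\tbinom pr2^ri^r$ is legitimate $2$-adically for any integer $p$, and the bookkeeping $r+\bigl(n-\a(n)-(r+j)\bigr)=n-\a(n)-j$ does show every $r$-term clears the claimed maximum. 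The heart of the matter, $\nu(U_k)\ge\nu(n!)-k$, checks out: with $U_k=2^{-k-1}(A-B)$, the valuation identity $\nu(2^{n-s}n^{\underline{s}})=\nu(n!)+\a(n-s)$ isolates the $s=n$ term of $A$, giving $A\equiv n!\,\Si$ and $B=(-1)^nn!\,\Si$ modulo $2^{\nu(n!)+1}$, and in the odd-$n$ case the residual $2n!\,\Si$ is itself divisible by $2^{\nu(n!)+1}$; so $\nu(A-B)\ge\nu(n!)+1$, and the extra unit of divisibility you correctly identify as essential is genuinely captured. The equality cases also work: for $n=2^e+1$, Proposition \ref{stirowr} with $\eps=b=1$ gives an odd value, so $\nu(U_{2^{e-1}})=\nu(2^{e-1}!)=2^{e-1}-1$; for $n=2^e+2$ with $\eps=0$, $b=1$ it gives $2(2^{e-1}+1)\equiv2\pmod 4$, so $\nu(U_{2^{e-1}})=2^{e-1}$; and the delicate $r=1$ term you flag for $n=2^e+2$ is indeed disposed of by the same proposition, since $S(2^{e-1}+1,2^{e-1})\equiv\tbinom{2^{e-1}+1}{2}\equiv0\pmod 2$ for $e\ge3$ pushes $\nu(U_{2^{e-1}+1})$ up to at least $2^{e-1}+1$, while for $e=2$ one gets exactly $2^{e-1}$, which still suffices. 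One small imprecision: your parenthetical claim that the correction term $2(2^{e-1}+1)S(2^{e-1},2^{e-1}-1)$ is divisible by $4$ fails at $e=2$, where $S(2,1)=1$ makes it $\equiv2\pmod4$; but the conclusion you need --- that the mod $4$ value is odd --- survives. The only structural caveat is that your argument leans on Proposition \ref{stirowr}, itself only quoted here without its hypotheses (it fails, for instance, at $2n+\eps=4$), so strictly speaking its applicability in the range you use ($2n+\eps\ge5$) is inherited from \cite{partial} rather than verified; within that range your computations agree with direct checks.
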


Other well-known facts that we will use are
\begin{equation}\label{Sa} (-1)^jj!S(k,j)=\sum\tbinom j{2i}(2i)^k-T_j(k)\end{equation}
and
\begin{equation}\label{Smod2}S(k+i,k)\equiv \tbinom{k+2i-1}{k-1}\mod 2.\end{equation}
We also use that $\nu(n!)=n-\a(n)$, where $\a(n)$ denotes the binary digital sum of $n$, and that $\binom mn$ is odd iff, for all $i$,
$m_i\ge n_i$, where these denote the $i$th digit in the binary expansions of $m$ and $n$.

\begin{proof}[Proof of Theorem \ref{five}(i).] Using (\ref{Sa}), we have
$$T_{2^e}(2^{e-1}t)\equiv -S(2^{e-1}t,2^e)(2^e)!\mod 2^{2^{e-1}t},$$
and we may assume $t\ge2$ using the periodicity  of $\nu(T_n(-))$ established in \cite[3.12]{CK}.
But $S(2^{e-1}t,2^e)\equiv\binom{2^et-2^{e+1}+2^e-1}{2^e-1}\equiv1$ mod $2$. Since $\nu(2^e!)=2^e-1<2^{e-1}t$,
we are done. \end{proof}

\begin{proof}[Proof of parts (ii) and (iii) of Theorem \ref{five}.]
 These parts follow from (a) and (b) below by letting $p=2^e+\eps-1$ in (b), and adding.
\begin{enumerate}
\item[(a)] Let $\eps\in\{0,1\}$ and $n\ge2^e+\eps$.
$$\nu(T_n(2^e+\eps-1))\begin{cases}=2^e+2^{e-1}-1&\text{if }\eps=1\text{ and }n=2^e+1\\
\ge 2^e+2^{e-1}+\eps-1&\text{otherwise.}\end{cases}$$
\item[(b)] Let $p\in\Z$, $n\ge2^e$, and $\nu(m)\ge2^{e-1}+e-1$. Then
$$\nu\biggl(\sum\tbinom n{2i+1}(2i+1)^p\bigl((2i+1)^m-1\bigr)\biggr)\begin{cases}=2^e+2^{e-1}-1&\text{if }n=2^e+1\text{ and}\\
&\nu(m)=2^{e-1}+e-1\\
\ge2^e+2^{e-1}&\text{otherwise.}\end{cases}$$
\end{enumerate}

First we prove (a). Using (\ref{Sa}) and the fact that $S(k,j)=0$ if $k<j$, it suffices to prove
$$\nu\bigl(\sum\tbinom n{2i}i^{2^e+\eps-1}\bigr)\begin{cases}=2^{e-1}-1&\text{if }\eps=1\text{ and }n=2^e+1\\
\ge2^{e-1}&\text{otherwise,}\end{cases}$$
  and this is implied by Proposition \ref{DSthm}  if $n\ge 2^e+4$.
For $\eps=0$ and $2^e\le n\le 2^e+3$, by Proposition \ref{stirowr}
$$\nu(\sum\tbinom n{2i}i^{2^e-1})\ge 2^{e-1}-1+\min(1,\nu(S(2^e-1,2^{e-1}+\delta)))$$
with $\delta\in\{0,1\}$. The Stirling number here is easily seen to be even by (\ref{Smod2}).

Similarly $\nu(\sum\binom{2^e+1}{2i}i^{2^e})=2^{e-1}-1$ since $S(2^e,2^{e-1})$ is odd, and if $n-2^e\in\{2,3\}$, then
$\nu\bigl(\sum\tbinom n{2i}i^{2^e}\bigr)\ge2^{e-1}$ since $S(2^e,2^{e-1}+1)$ is even.

Now we prove part (b). The sum equals $\sum_{j>0} T_j$, where
$$T_j=2^j\tbinom mj\sum_i\tbinom n{2i+1}(2i+1)^pi^j.$$
We show that $\nu(T_j)=2^e+2^{e-1}-1$ if $n=2^e+1$, $j=2^{e-1}$, and $\nu(m)=2^{e-1}+e-1$, while in all other
cases, $\nu(T_j)\ge 2^e+2^{e-1}$.  If $j\ge2^e+2^{e-1}$, we use the $2^j$-factor. Otherwise, $\nu(\binom mj)=\nu(m)-\nu(j)$, and we use the first part of the max in Proposition \ref{lcor1} if $\nu(j)\ge e-1$, and the second part of the max otherwise.
\end{proof}

\def\line{\rule{.6in}{.6pt}}

\end{document}